\documentclass[11pt,reqno]{amsart}

\usepackage[T1]{fontenc}
\usepackage[utf8]{inputenc}
\usepackage[margin=1in]{geometry}
\usepackage{amsmath,amssymb,amsthm,amsfonts}
\usepackage{xcolor}
\usepackage{enumitem}
\usepackage{microtype}
\usepackage{indentfirst}
\usepackage[colorlinks=true,linkcolor={blue!75!black},citecolor={blue!80!black},urlcolor={blue!65!black}]{hyperref}

\numberwithin{equation}{section}

\theoremstyle{plain}
\newtheorem{theorem}{Theorem}[section]
\newtheorem{lemma}[theorem]{Lemma}
\newtheorem{corollary}[theorem]{Corollary}

\newcommand{\N}{\mathbb{N}}
\newcommand{\Nzero}{\mathbb{N}_0}
\newcommand{\HQ}{\mathbf{H}_Q}
\newcommand{\HNstar}{\mathbf{H}_{\N}^{\ast}}

\begin{document}

\title{Global Product Intersection Sets in Semigroups}

\author{Wouter van Doorn}
\address{Groningen, the Netherlands}
\email{wonterman1@hotmail.com}

\author{Pietro Monticone}
\address{Harmonic, London, United Kingdom}
\email{pietro.monticone@harmonic.fun}

\author{Quanyu Tang}
\address{School of Mathematics and Statistics, Xi'an Jiaotong University, Xi'an 710049, P. R. China}
\email{tang\_quanyu@163.com}

\subjclass[2020]{11B13, 11B05, 11B75, 11P70, 20M99}
\keywords{product set, product intersection set, semigroup,
formal verification, automated theorem proving}

\begin{abstract}
For a family $(A_q)_{q\in Q}$ of subsets of a semigroup, the product
intersection set records those exponents $h \in \N$ for which the $h$-fold
product set of the intersection, $(\bigcap_q A_q)^h$, is equal to $\bigcap_q
A_q^h$, the intersection of the product sets. Nathanson recently asked which
subsets of $\N$ can occur as a product intersection set, both for arbitrary and
for decreasing families $(A_q)_{q\in Q}$. We solve both problems by giving a
complete classification. In particular, when $|Q| \ge 2$, we show that in either
case any subset $X \subseteq \N$ with $1 \in X$ occurs as a product intersection
set. Both classifications were autonomously discovered and formally verified in
Lean by \textnormal{\href{https://aristotle.harmonic.fun/}{Aristotle}}, a formal
reasoning agent developed by
\textnormal{\href{https://www.harmonic.fun/}{Harmonic}}.
\end{abstract}

\maketitle

\section{Introduction}

Let $S$ be a multiplicative semigroup and, with $Q$ an index set, let
$(A_q)_{q\in Q}$ be a family of subsets of $S$. We then write $A$ for the
intersection $\bigcap_{q\in Q} A_q$, where we interpret this intersection to be
the entire semigroup if $Q$ is the empty set. Further, for a subset $B\subseteq
S$, define the \emph{$h$-fold product set}
\[
B^h=\{b_1\cdots b_h:b_1,\dots,b_h\in B\},\qquad h\in\N,
\]
where throughout the paper, we write $\N:=\{1,2,3,\dots\}$. With these
definitions, one can quickly verify that $A^h \subseteq \bigcap_{q\in Q} A_q^h$
for every $h\in\N$~\cite[Eq.~(1)]{Nathanson2026Problems}. The set of exponents for which
this containment is an equality is called the
\emph{product intersection set}
\[
H_Q^S(A_q):=\left\{h\in\N: A^h=\bigcap_{q\in Q} A_q^h\right\}.
\]
For a fixed index set $Q$, the associated global set is
\[
\HQ:=\bigl\{H_Q^S(A_q): \text{$S$ is a semigroup and $(A_q)_{q\in Q}$
  is a $Q$-indexed family of subsets of $S$}\bigr\}.
\]
Thus, $\HQ$ is the collection of all subsets of $\N$ that arise as product
intersection sets for $Q$-indexed families in arbitrary semigroups.

In the special case $Q=\N$ we say that a sequence $(A_q)_{q\ge1}$ of subsets of
a semigroup $S$ is \emph{asymptotically strictly decreasing} if
$A_{q+1}\subseteq A_q$ for all $q\ge1$ and $A_q\ne A_{q+1}$ for infinitely many
values of $q$, i.e.\ the sequence is not eventually constant. The corresponding
global set is
\[
\HNstar:=\bigl\{H_{\N}^{S}(A_q): \text{$S$ is a semigroup and
  $(A_q)_{q\ge1}$ is asymptotically strictly decreasing}\bigr\}.
\]

The question of when the $h$-fold sumset of a decreasing intersection
agrees with the intersection of the $h$-fold sumsets was introduced
by Nathanson for additive abelian semigroups~\cite{Nathanson2026Intersections},
and was subsequently extended in~\cite{Nathanson2026Problems} to
product sets in arbitrary semigroups and to families indexed by general
sets $Q$, where the global sets $\HQ$ and $\HNstar$ were defined.
Nathanson~\cite[Theorem~14]{Nathanson2026Problems} proved that these two global sets
are closed under intersections and asked various natural follow-up
questions~\cite[Problems~10 and~11, and the second part of Problems~4
and~7]{Nathanson2026Problems}:

\begin{itemize}[leftmargin=2em]
\item Are $\HQ$ and $\HNstar$ also closed under unions?
\item For a fixed index set $Q$, does there exist a set $X\subseteq\N$ with
  $1\in X$ and such that either $X\notin\HQ$ or $X\notin\HNstar$?
\end{itemize}

The purpose of this paper is to answer both questions, by exactly describing
$\HQ$ and $\HNstar$. More precisely, in Section~\ref{sec:HNstar} we prove the
following result for $\HNstar$.

\begin{theorem}\label{thm:hnstar-classification}
We have \[
\HNstar=\{X\subseteq\N: 1\in X\}.
\]
\end{theorem}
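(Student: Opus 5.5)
The inclusion $\HNstar\subseteq\{X\subseteq\N:1\in X\}$ is immediate, since for $h=1$ both sides of the defining equality are $A=\bigcap_q A_q$. All the work lies in the reverse inclusion. The plan is to realize a single excluded exponent $m\ge2$ by a nilpotent semigroup, then glue these blocks together by a direct product so that the failing exponents are exactly $\N\setminus X$.

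\emph{Step 1: the blocks.} Fix $m\ge2$. Let $F$ be the free semigroup on letters $a_1,a_2,\dots$, and adjoin a zero $0$. Let $S_m$ be the quotient in which:
\begin{itemize}[leftmargin=2em]
\item every word of length $>m$ is $0$;
\item every non-constant word of length $m$ is $0$;
\item all constant words $a_r^m$, $r\ge1$, are identified with one element $w\neq0$;
\item words of length $<m$ remain distinct.
\end{itemize}
Any product involving an element of length $m$ has length $>m$ and is therefore $0$. So the identifications are compatible with multiplication, and $S_m$ is a semigroup.

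Put $A_q=\{a_r:r\ge q\}\cup\{0\}$. This is strictly decreasing at every step, and its intersection is $A=\{0\}$, so $A^h=\{0\}$ for all $h$. We then check each range of $h$:
\begin{itemize}[leftmargin=2em]
\item For $2\le h<m$, $A_q^h$ consists of $0$ and the words of length $h$ in letters of index $\ge q$. Each word uses finitely many indices, so $\bigcap_q A_q^h=\{0\}$ and $h$ belongs to the product intersection set.
\item For $h>m$, everything is $0$.
\item For $h=m$, we have $w=a_q^m\in A_q^m$ for every $q$, but $w\notin A^m$.
\end{itemize}
Hence the product intersection set is exactly $\N\setminus\{m\}$. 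The case $X=\N$ is realized by the same sequence in the semigroup where all words of length $\ge2$ are $0$.

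\emph{Step 2: gluing.} Given $X\ni1$, let $M=\N\setminus X$. If $M=\emptyset$ we are done by Step 1, so assume $M\neq\emptyset$. Take $S=\prod_{m\in M}S_m$ (a full, possibly infinite, direct product) with componentwise multiplication, and set $A_q=\prod_{m\in M}A_q^{(m)}$, where $A_q^{(m)}$ is the block sequence for $S_m$. Because products are computed componentwise, $B^h=\prod_m (B^{(m)})^h$ for any product set $B=\prod_m B^{(m)}$, and intersections of product sets are taken componentwise. Since every factor set is nonempty (each contains $0$), two such products are equal if and only if they agree in every coordinate. Therefore $A^h=\bigcap_q A_q^h$ holds exactly when it holds in every component, i.e.\ exactly when $h\notin M$.

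The sequence $(A_q)$ is decreasing. It is not eventually constant, because already a single component strictly decreases at every step. This gives $X\in\HNstar$. Alternatively, one could invoke Nathanson's closure under intersections \cite[Theorem~14]{Nathanson2026Problems}, but only if it applies to arbitrary (infinite) intersections. The direct product argument avoids that question.

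\emph{Main obstacle.} The main difficulty is the design of the block in Step 1. Naive constructions that track only word length and a "contains a non-$A$ letter" flag are not congruences: any identification made at a length $k<m$ propagates upward and produces failures on a whole interval $\{2,\dots,m\}$ rather than at $m$ alone. The fix above has three parts:
\begin{itemize}[leftmargin=2em]
\item the witness $w$ is produced by \emph{different} factorizations $a_q^m$ for different $q$;
\item words of length $<m$ are left free, so the intersection over $q$ collapses to $\{0\}$ by an index argument;
\item every identification is made at the top length $m$, where it cannot propagate.
\end{itemize}
The points to verify carefully are associativity of $S_m$, and that $A_q^h\cap(S_m\setminus\{0\})$ really shrinks to nothing for $h<m$.
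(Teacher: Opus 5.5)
Your proposal is correct and follows essentially the same route as the paper. Both use a nilpotent truncated-word semigroup over an infinite alphabet with a single distinguished non-zero element at length exactly $m$, the sequence $\{0\}\cup\{a_r:r\ge q\}$, and a full direct product over the excluded exponents. The differences are cosmetic: the paper sends \emph{all} length-$n$ words to its element $\beta$ rather than only the constant ones, and it realizes $X=\mathbb{N}$ with $(\mathbb{N}_0,\cdot)$ instead of your null semigroup. Your explicit use of the fact that every coordinate set contains $0$, to justify the coordinatewise equality and the strict decrease in the product, is a point the paper leaves implicit.
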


As for $\HQ$, in Section~\ref{sec:HQ} we show that it is fully determined by the
cardinality of $Q$, in the following way.

\begin{theorem}\label{thm:hq-classification}
We have
\[
\HQ=
\begin{cases}
\{\{1\},\N\}, & Q=\varnothing, \\
\{\N\}, & |Q|=1,\\
\{X\subseteq\N:1\in X\}, & |Q|\ge 2.
\end{cases}
\]
\end{theorem}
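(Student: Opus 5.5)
The plan is to treat the three cases of Theorem~\ref{thm:hq-classification} separately. The first two are immediate from the definitions; all the real work is in the case $|Q|\ge 2$, where I will reduce to two sets and build a direct product of small gadgets, one for each excluded exponent.

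\emph{The cases $Q=\varnothing$ and $|Q|=1$.} If $Q=\varnothing$, then $A=S$, and by convention the empty intersection $\bigcap_q A_q^h$ is also $S$. Hence $H_Q^S=\{h: S^h=S\}$. This set always contains $1$.
\begin{itemize}[leftmargin=2em]
\item If $S^2=S$, then inductively $S^h=S$ for all $h$, so $H_Q^S=\N$.
\item Otherwise $S^h\subseteq S^2\subsetneq S$ for every $h\ge 2$, so $H_Q^S=\{1\}$.
\end{itemize}
Both values occur: a group gives $\N$, and $(\N,+)$ gives $\{1\}$. If $|Q|=1$, then $A=A_q$ and the defining equality holds trivially, so $\HQ=\{\N\}$.

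\emph{The case $|Q|\ge2$: reduction.} Here $1\in H_Q^S$ always, since $A^1=A=\bigcap_q A_q$. So the task is to realize every $X\ni 1$. Fix two indices $q_1\ne q_2$ and set $A_q=S$ for all other $q$. These extra sets change neither $A$ nor $\bigcap_q A_q^h$, because $S^h\supseteq A_{q_1}^h$. It therefore suffices to find, for each $X\ni1$, a semigroup $S$ and subsets $B,C\subseteq S$ with $(B\cap C)^h=B^h\cap C^h$ exactly for $h\in X$. For $X=\N$ we may simply take $B=C=S$.

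\emph{Gadget for one exponent.} For a single $m\ge2$, let $S_m$ be the semigroup of nonempty words of length $\le m$ over the letters $a,b,c$, modulo the single relation $b^m=c^m$, together with an absorbing zero $0$. Any product of total length $>m$ is declared to be $0$. Put $B_m=\{a,b\}$ and $C_m=\{a,c\}$, so that $B_m\cap C_m=\{a\}$. I would then check three ranges of $h$.
\begin{itemize}[leftmargin=2em]
\item For $h<m$: $B_m^h$ consists of the words of length $h$ in $a,b$, and $C_m^h$ of those in $a,c$. The relation is not active at these lengths, so the only common word is $a^h$. Equality holds.
\item For $h=m$: the element $b^m=c^m$ lies in $B_m^m\cap C_m^m$ but not in $\{a^m\}$. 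Equality fails.
\item For $h>m$: all three sets equal $\{0\}$. Equality holds.
\end{itemize}
Thus the product intersection set of this pair is exactly $\N\setminus\{m\}$.

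\emph{Combining gadgets.} For general $X$, let $S=\prod_{m\notin X}S_m$ with coordinatewise multiplication; this is a full, possibly infinite, product. Take $B=\prod B_m$ and $C=\prod C_m$. Coordinates can be chosen independently, so $B^h=\prod_m B_m^h$, and likewise for $C$ and for $B\cap C=\prod_m(B_m\cap C_m)$. Intersections of product sets are also computed coordinatewise. Hence $(B\cap C)^h=B^h\cap C^h$ holds if and only if it holds in every coordinate, that is, if and only if $h\ne m$ for all $m\notin X$, i.e.\ $h\in X$.

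The step needing the most care is the gadget verification. One must confirm that the quotient by $b^m=c^m$ identifies nothing else in lengths $\le m$. This holds because the relation only rewrites the whole word $b^m$, and the truncation to $0$ at length $>m$ prevents that identification from leaking into larger $h$.
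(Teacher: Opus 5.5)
Your proof is correct. Its overall architecture is the paper's. The cases $|Q|\le1$ are argued identically, and your witnesses (a group and $(\N,+)$) work as well as the paper's monoid and zero semigroup on $\{0,1\}$. For $|Q|\ge2$, both proofs fix two indices, put $A_q=S$ elsewhere, build a gadget with product intersection set $\N\setminus\{m\}$ for each $m\notin X$, and compare coordinatewise in the full direct product.

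The genuine difference is the gadget. The paper uses the truncated additive monoid $\{0,1,\dots,n^3+n^2\}$ with $B_n=\{n^2,n^2+1\}$ and $C_n=\{n^2,n^2+n\}$. There the extra common element $n^3+n$ at $h=n$ arises arithmetically: it is both $n$ copies of $n^2+1$ and $n-1$ copies of $n^2$ plus one $n^2+n$. A size comparison rules out collisions for $h<n$. Associativity is automatic for this commutative monoid, which also gives $S^h=S$ for the dummy indices.

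Your Rees-quotient gadget instead imposes the collision $b^m=c^m$ by fiat. This makes the three ranges of $h$ nearly tautological and mirrors the paper's word construction for $\HNstar$. The cost is that you must know the quotient is a semigroup, and you should state this explicitly. Consider the equivalence on the free semigroup whose classes are the singletons of length $\le m$ (except the single class $\{b^m,c^m\}$), plus one class containing all words of length $>m$. This is a congruence, since multiplying $b^m$ or $c^m$ by any further letter, on either side, gives a word of length $>m$. Associativity and the ``nothing else is identified'' claim then follow at once.

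Your treatment of the dummy indices via $A_{q_1}^h\subseteq S^h$ is slightly more general than the paper's, since it needs no identity element. Finally, like the paper, you use that equal box products have equal factors. This requires non-empty factors, which holds here because each $(B_m\cap C_m)^h$ is a singleton.
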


We note that answers to Nathanson's two questions immediately follow from
Theorems~\ref{thm:hnstar-classification} and~\ref{thm:hq-classification}.
Indeed, for both sets the answer to the first question is positive, while the
answer to the second question is negative, unless $|Q|\le 1$ in the case of
$\HQ$. We further remark that for non-empty $Q$ the same results hold true if we
restrict to monoids instead of semigroups. To see this, let $S$ be a semigroup
and let $(A_q)_{q \in Q}$ be a family of subsets of $S$, where $Q \neq
\varnothing$. Now, if $S$ is a subsemigroup of $T$, the sets $A^h$ and $A_q^h$
stay the same, regardless of whether they are computed in $S$ or in $T$. We
therefore have the equality
\[
H_{Q}^{S}(A_q) = H_{Q}^{T}(A_q).
\]
In particular, if $T$ is the monoid obtained from $S$ by adjoining an identity
element, we find that any product intersection set realized by a semigroup can
be realized by a monoid as well.

For a brief overview of the proofs, assume for the moment $|Q| \ge 2$ (as
the proof for $|Q| \le 1$ will be fairly straightforward), and let $n \ge 2$
be an integer. Then, in order to prove either Theorem~\ref{thm:hnstar-classification}
or~\ref{thm:hq-classification}, we construct an
explicit semigroup $S_n$ and an explicit family of subsets $(A_{n,q})_{q\in Q}$
of $S_n$ (which is decreasing in the case of $\HNstar$) such that
\begin{equation} \label{eq:single-exclusion}
H_Q^{S_n}(A_{n,q}) = \N\setminus\{n\}.
\end{equation}

We then combine these single-exclusion building blocks via product
constructions to produce arbitrary subsets of $\N$ containing $1$. Before we do
this, however, we first need a few preliminary lemmas.

\section{Elementary product lemmas}
When $h = 1$, both sides of the containment $A^h \subseteq \bigcap_{q} A_q^h$
simplify to $\bigcap_{q} A_q$, so that, in particular, the sets are equal. We
therefore obtain the following necessary condition for sets in $\HQ$ and
$\HNstar$.

\begin{lemma}\label{lem:onealways}
For every set $Q$ and for every $Q$-indexed
family $(A_q)_{q\in Q}$ of subsets of a semigroup $S$, we have $1 \in H_Q^S(A_q)$.
Hence, if $X \in \HQ$ or $X \in \HNstar$, then $1 \in X$.
\end{lemma}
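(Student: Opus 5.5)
The plan is to unwind the definitions at $h=1$ and observe that both sides of the defining equality coincide. By the definition of the $h$-fold product set, $B^1=\{b_1: b_1\in B\}=B$ for every subset $B\subseteq S$. So for $h=1$ the left-hand side is $A^1=A=\bigcap_{q\in Q}A_q$, and the right-hand side is $\bigcap_{q\in Q}A_q^1=\bigcap_{q\in Q}A_q$. These are literally the same set, hence $1\in H_Q^S(A_q)$.

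The only point needing a moment's care is the degenerate case $Q=\varnothing$. There I would invoke the stated convention that an empty intersection is the whole semigroup $S$. Then $A=S$, so $A^1=S$, and the right-hand side $\bigcap_{q\in\varnothing}A_q^1$ is also $S$ by the same convention. The equality therefore still holds, and no special argument is needed beyond applying the convention uniformly to both sides.

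For the second sentence, I would note that every $X\in\HQ$ is by definition of the form $H_Q^S(A_q)$ for some semigroup $S$ and some family $(A_q)_{q\in Q}$. Likewise, every $X\in\HNstar$ is $H_{\N}^S(A_q)$ for some asymptotically strictly decreasing sequence, which is in particular an $\N$-indexed family. Applying the first part to that family gives $1\in X$ in both cases.

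I expect no real obstacle here. The only thing to verify explicitly is the identity $B^1=B$ and the treatment of the empty index set.
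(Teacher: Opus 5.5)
Your proposal is correct and matches the paper's argument: at $h=1$ both sides reduce to $\bigcap_{q\in Q}A_q$, and the second sentence follows by unwinding the definitions of $\HQ$ and $\HNstar$. Your explicit handling of $Q=\varnothing$, applying the whole-semigroup convention to both sides, is consistent with how the paper treats the empty family in Section~\ref{sec:HQ}.
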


We now record two basic lemmas for products of semigroups.

\begin{lemma}\label{lem:boxproduct}
Let $(S_i)_{i\in I}$ be a family of semigroups. For subsets $B_i,C_i\subseteq S_i$,
consider the box products
\[
B=\prod_{i\in I} B_i,\qquad C=\prod_{i\in I} C_i
\]
as subsets of $\prod_{i\in I} S_i$. Then
\[
BC=\prod_{i\in I} (B_iC_i).
\]
\end{lemma}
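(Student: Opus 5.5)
The plan is to prove the equality by double inclusion, working coordinatewise in the direct product $\prod_{i\in I} S_i$, where multiplication is componentwise: $(s_i)_{i\in I}(t_i)_{i\in I}=(s_it_i)_{i\in I}$. Throughout, elements of the product are viewed as functions on $I$ (choice functions). This is how one is allowed to pick an element from each $B_i$ and $C_i$ simultaneously.

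For the inclusion $BC\subseteq\prod_{i\in I}(B_iC_i)$, take an element $bc$ with $b=(b_i)\in B$ and $c=(c_i)\in C$. Then $b_i\in B_i$ and $c_i\in C_i$ for every $i$. Hence the $i$-th coordinate $b_ic_i$ of $bc$ lies in $B_iC_i$ for every $i$, so $bc$ lies in the box product on the right.

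For the reverse inclusion, take $x=(x_i)\in\prod_{i\in I}(B_iC_i)$.
\begin{itemize}
\item For each $i$, the set $W_i=\{(b,c)\in B_i\times C_i : bc=x_i\}$ is nonempty, since $x_i\in B_iC_i$.
\item By the axiom of choice (for infinite $I$), choose $(b_i,c_i)\in W_i$ for every $i$ at once.
\item Set $b=(b_i)\in B$ and $c=(c_i)\in C$. Then $bc=(b_ic_i)=(x_i)=x$, so $x\in BC$.
\end{itemize}

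The only point needing care is this simultaneous choice over a possibly infinite index set $I$, which the axiom of choice handles.

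The degenerate cases need no separate treatment. If some $B_i$ or $C_i$ is empty, both sides are empty. If $I=\varnothing$, both sides are the one-point product. There is no real obstacle beyond this bookkeeping.
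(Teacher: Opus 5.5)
Your proof is correct and matches the paper's argument: double inclusion, with the forward direction read off coordinatewise and the reverse direction obtained by choosing, for each $i$, a factorization $x_i=b_ic_i$ with $b_i\in B_i$ and $c_i\in C_i$. Your explicit appeal to the axiom of choice for infinite $I$ and your check of the degenerate cases are harmless additions that the paper leaves implicit.
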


\begin{proof}
If $x=bc\in BC$ with $b=(b_i)\in B$ and $c=(c_i)\in C$, then $x_i=b_ic_i\in
B_iC_i$ for every $i$, hence $x\in\prod_i(B_iC_i)$. This proves
$BC\subseteq\prod_i(B_iC_i)$.

Conversely, let $x=(x_i)\in\prod_i(B_iC_i)$. For each $i$ choose $b_i\in B_i$
and $c_i\in C_i$ with $x_i=b_ic_i$. Then $b=(b_i)\in B$, $c=(c_i)\in C$, and
$x=bc\in BC$. Hence $\prod_i(B_iC_i)\subseteq BC$.
\end{proof}

\begin{corollary}\label{cor:boxpower}
With the notation of Lemma~\ref{lem:boxproduct}, for every $h\in\N$,
\[
\left(\prod_{i\in I} B_i\right)^h=\prod_{i\in I} B_i^h.
\]
\end{corollary}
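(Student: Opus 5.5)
The plan is induction on $h$, with Lemma~\ref{lem:boxproduct} supplying the inductive step.

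For $h=1$ there is nothing to prove, since both sides are $\prod_{i\in I} B_i$.

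For the inductive step, assume the identity holds for some $h\ge1$. In any semigroup $B^{h+1}=B^hB$: a product of $h+1$ elements of $B$ is, by associativity, a product of $h$ elements followed by one more, and conversely. Write $B=\prod_i B_i$. By the induction hypothesis, $B^h=\prod_i B_i^h$ is again a box product. We may therefore apply Lemma~\ref{lem:boxproduct} to the box products $\prod_i B_i^h$ and $\prod_i B_i$, which gives
\[
B^{h+1}=B^hB=\prod_{i\in I}\bigl(B_i^hB_i\bigr)=\prod_{i\in I}B_i^{h+1}.
\]
The last equality uses $B_i^hB_i=B_i^{h+1}$ in each factor $S_i$.

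No step is a genuine obstacle. The only point needing care is the identification $B^{h+1}=B^hB$ in each semigroup. It holds because, by the definition of $B^h$ as the set of all $h$-fold products, every element of $B^hB$ is a product of $h+1$ elements of $B$ and vice versa. Note also that empty factors $B_i$ cause no trouble, since both sides are then empty. This completes the induction.
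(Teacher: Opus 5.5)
Your proof is correct and is essentially the paper's argument: induction on $h$, with the inductive step $\bigl(\prod_i B_i\bigr)^{h+1}=\bigl(\prod_i B_i^h\bigr)\bigl(\prod_i B_i\bigr)=\prod_i (B_i^hB_i)=\prod_i B_i^{h+1}$ obtained from the induction hypothesis and Lemma~\ref{lem:boxproduct}. Your side remarks (that $B^{h+1}=B^hB$ holds by associativity, and that empty factors $B_i$ are harmless) are accurate, though not needed for the argument.
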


\begin{proof}
This follows by induction on $h$. The case $h=1$ is trivial. If the statement
holds for $h$, then by Lemma~\ref{lem:boxproduct}
\[
\left(\prod_i B_i\right)^{h+1}
=\left(\prod_i B_i\right)^h\left(\prod_i B_i\right)
=\left(\prod_i B_i^h\right)\left(\prod_i B_i\right)
=\prod_i (B_i^hB_i)
=\prod_i B_i^{h+1}. \qedhere
\]
\end{proof}

\begin{lemma}\label{lem:boxinter} Let $I$ and $Q$ be index sets and let
$(A_{i,q})_{i\in I,q\in Q}$ be a doubly indexed family of subsets of
semigroups $(S_i)_{i\in I}$. Then
\[
\bigcap_{q\in Q}\left(\prod_{i\in I} A_{i,q}\right)=\prod_{i\in I}\left(\bigcap_{q\in Q}A_{i,q}\right).
\]
\end{lemma}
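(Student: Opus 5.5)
The plan is to prove the equality by a direct elementwise comparison of coordinates, showing both inclusions at once. An element of $\prod_{i\in I} S_i$ is a function $x=(x_i)_{i\in I}$ with $x_i\in S_i$. Membership in a box product $\prod_{i\in I} B_i$ means precisely that $x_i\in B_i$ for every $i\in I$. The whole argument therefore reduces to interchanging two universal quantifiers.

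First, fix $x=(x_i)\in\prod_i S_i$. Then $x$ lies in the left-hand side if and only if, for every $q\in Q$, we have $x\in\prod_{i} A_{i,q}$. That is, for every $q\in Q$ and every $i\in I$, $x_i\in A_{i,q}$.

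Second, $x$ lies in the right-hand side if and only if, for every $i\in I$, $x_i\in\bigcap_{q} A_{i,q}$. That is, for every $i\in I$ and every $q\in Q$, $x_i\in A_{i,q}$.

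Since the two conditions differ only in the order of the two universal quantifiers over $q$ and $i$, they are equivalent, and the sets coincide. Unlike Lemma~\ref{lem:boxproduct}, no choice of witnesses is needed, because membership in a box product is checked coordinatewise.

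The only point requiring care is the degenerate case $Q=\varnothing$, where by the paper's convention an empty intersection is the whole ambient set. On the left, the ambient set is $\prod_i S_i$. On the right, each $\bigcap_{q\in\varnothing}A_{i,q}$ is $S_i$, so the right-hand side is again $\prod_i S_i$, and the identity holds. This is consistent with the quantifier argument, since both conditions become vacuous. I will also note that $I=\varnothing$ causes no trouble: both sides are then the one-point product.

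I expect no genuine obstacle here. The only thing worth stating explicitly is that the empty-intersection convention is applied in the correct ambient semigroup on each side.
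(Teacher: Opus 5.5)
Your proof is correct and is essentially the paper's own argument: both reduce membership on each side to the condition that $a_i\in A_{i,q}$ for all $i\in I$ and $q\in Q$, so the equality amounts to interchanging two universal quantifiers. Your additional remarks on the degenerate cases $Q=\varnothing$ and $I=\varnothing$ are accurate and consistent with the paper's empty-intersection convention.
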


\begin{proof}
An element $a=(a_i)$ belongs to the left-hand side if and only if, for every
$q\in Q$ and every $i\in I$, one has $a_i\in A_{i,q}$. This is equivalent to
saying that, for every $i\in I$, one has $a_i\in\bigcap_{q\in Q}A_{i,q}$, which
is exactly the condition for $a$ to belong to the product on the right-hand
side.
\end{proof}

\section{The set \texorpdfstring{$\HNstar$}{HN*}}
\label{sec:HNstar}

The goal of this section is to prove Theorem~\ref{thm:hnstar-classification}, by first
constructing a semigroup $S_n$ and a decreasing sequence of subsets
$(A_{n,q})_{q\ge1}$ of $S_n$ such that \eqref{eq:single-exclusion} holds. For
this, we define $\mathcal{W}$ to be the set of all non-empty words over the
alphabet $\N$, and let $\alpha$ and $\beta$ be two distinct symbols not
contained in $\mathcal W$.

\subsection{A single-exclusion semigroup}

Fix an integer $n \ge 2$, and let $\mathcal W_{<n}$ be the subset of
$\mathcal{W}$ that consists of all words of length $<n$. We then define $S_n :=
\mathcal W_{<n}\cup\{\alpha,\beta\}$, and we will turn $S_n$ into a semigroup.

For words $u,v\in\mathcal W_{<n}$, let $uv$ denote concatenation. We define a
multiplication on $S_n$ as follows:
\begin{equation*}
\alpha \cdot x = x \cdot \alpha = \alpha,\qquad \beta \cdot x = x \cdot \beta = \alpha\qquad (x\in S_n),
\end{equation*}
and for $u,v\in\mathcal W_{<n}$,
\[
u\cdot v=
\begin{cases}
uv, & |u|+|v|<n,\\
\beta, & |u|+|v|=n,\\
\alpha, & |u|+|v|>n.
\end{cases}
\]

\begin{lemma}\label{lem:Tnsemigroup}
$S_n$ is a semigroup.
\end{lemma}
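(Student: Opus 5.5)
The multiplication is plainly closed on $S_n$, so the only thing to check is associativity: $(xy)z = x(yz)$ for all $x,y,z\in S_n$. The cleanest route is to introduce a \emph{weight} $w\colon S_n\to\{1,2,\dots\}\cup\{\infty\}$. Set $w(u)=|u|$ for $u\in\mathcal W_{<n}$, $w(\beta)=n$, and $w(\alpha)=\infty$. Then show that every product $xy$ is determined by the pair $(x,y)$ only through concatenation data and the sum $w(x)+w(y)$.

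Concretely, I would first check the description
\[
x\cdot y=\begin{cases} xy \text{ (concatenation)}, & x,y\in\mathcal W_{<n},\ w(x)+w(y)<n,\\ \beta, & x,y\in\mathcal W_{<n},\ w(x)+w(y)=n,\\ \alpha, & w(x)+w(y)>n \text{ or } x,y \text{ not both words}.\end{cases}
\]
Whenever $\alpha$ or $\beta$ is a factor, the weight sum is at least $n+1$, since every word has weight $\ge1$. So the last case covers all products involving $\alpha$ or $\beta$, and it agrees with the definition in the paper. In particular, $w(xy)=w(x)+w(y)$ whenever $xy\neq\alpha$, and $xy=\alpha$ exactly when $w(x)+w(y)>n$.

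For associativity I then split on the total weight $N=w(x)+w(y)+w(z)$.

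If $N>n$, both sides are $\alpha$. Suppose instead that $(xy)z\neq\alpha$. Then $xy\neq\alpha$, so $w(xy)=w(x)+w(y)$, and $w((xy)z)=N>n$. That contradicts the product being a word or $\beta$, both of which have weight $\le n$. The same argument applies to $x(yz)$.

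If $N=n$, then $x,y,z$ are all words, and every partial sum is $<n$. So $xy$ and $yz$ are the concatenated words, and both sides equal $\beta$.

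If $N<n$, then all three are words, and both sides equal the concatenation $xyz$ by associativity of concatenation.

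The only mildly delicate point is the claim that $\alpha$ or $\beta$ can appear only when the weight sum exceeds $n$. This needs the words to be non-empty (weight $\ge1$), and that is why $\mathcal W$ consists of non-empty words. With that observation the case analysis is routine, and I expect no real obstacle.
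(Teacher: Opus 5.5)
Your proof is correct and follows essentially the same route as the paper, which also splits on whether the total length is $<n$, $=n$ or $>n$. The only difference is that your weight $w$ (with $w(\beta)=n$ and $w(\alpha)=\infty$) absorbs the paper's separate ``some factor is $\alpha$ or $\beta$'' case into your $N>n$ case; your remark that words must be non-empty is exactly what the paper uses implicitly in its $L=n$ case.
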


\begin{proof}
As $S_n$ is certainly closed under the multiplication operation we defined, it
is sufficient to prove that this multiplication is associative. Now, if at least
one of $x,y,z$ is $\alpha$ or $\beta$, then both $(x \cdot y) \cdot z$ and $x
\cdot (y \cdot z)$ are equal to $\alpha$, because any product with one factor
equal to $\alpha$ or $\beta$ equals $\alpha$. It therefore further suffices to
consider the case in which $x,y,z$ are elements of $\mathcal W_{<n}$, say $x=u$,
$y=v$, $z=w$. With $L := |u|+|v|+|w|$, there are then three cases to consider.

If $L<n$, then both $u \cdot v = uv$ and $v \cdot w = vw$ are words of length
$<n$, and
\[
(u \cdot v) \cdot w = u \cdot (v \cdot w) = uvw,
\]
which is again a word of length $L<n$.

If $L=n$, then $(|u|+|v|)+|w| = |u|+(|v|+|w|) = n$, so that
\[
(u \cdot v) \cdot w = u \cdot (v \cdot w) = \beta.
\]

If $L>n$, we claim that both $(u \cdot v) \cdot w$ and $u \cdot (v \cdot w)$ are
equal to $\alpha$. To prove $(u \cdot v) \cdot w = \alpha$, first assume
$|u|+|v|\ge n$. In that case $u \cdot v \in \{\alpha,\beta\}$, and therefore $(u
\cdot v) \cdot w = \alpha$. On the other hand, if $|u|+|v|<n$, then $u \cdot v =
uv$ with $|uv| + |w| = L > n$, so that $(u \cdot v) \cdot w = \alpha$. As the
symmetric argument applies to $u \cdot (v \cdot w)$, we do indeed get
\[
(u \cdot v) \cdot w = u \cdot (v \cdot w) = \alpha.
\]
We conclude that the multiplication we defined on $S_n$ is associative, and
hence we see that $S_n$ is a semigroup.
\end{proof}

For each $q\ge 1$, define
\[
A_{n,q} := \{\alpha\} \cup \{[m]: m\ge q\} \subseteq S_n,
\]
where $[m]$ denotes the one-letter word whose only letter is $m$.

\begin{lemma}\label{lem:singledecrease}
The sequence $(A_{n,q})_{q\ge 1}$ is strictly decreasing and
$\bigcap_{q\ge 1} A_{n,q}=\{\alpha\}$.
\end{lemma}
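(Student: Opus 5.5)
The claim follows directly from the definition $A_{n,q} = \{\alpha\} \cup \{[m] : m \ge q\}$, and I would verify its three parts in turn.

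\emph{Monotonicity.} If $m \ge q+1$ then $m \ge q$. Hence $\{[m]: m\ge q+1\}\subseteq\{[m]: m\ge q\}$, and adjoining $\alpha$ to both sides gives $A_{n,q+1}\subseteq A_{n,q}$ for every $q\ge1$.

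\emph{Strictness.} The one-letter word $[q]$ lies in $A_{n,q}$ by definition. To see that it is not in $A_{n,q+1}$, note two facts. First, $[q]\neq\alpha$, because $\alpha\notin\mathcal W$. Second, two one-letter words $[m]$ and $[q]$ are equal only when $m=q$, and $q<q+1$. So $A_{n,q}\neq A_{n,q+1}$ for every $q$. In particular the sequence is not eventually constant, so it is asymptotically strictly decreasing in the sense required for $\HNstar$. Here we use $n\ge2$, which guarantees that one-letter words belong to $\mathcal W_{<n}$ and hence to $S_n$.

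\emph{The intersection.} The element $\alpha$ lies in every $A_{n,q}$, so $\{\alpha\}\subseteq\bigcap_q A_{n,q}$. For the reverse inclusion, every element of $A_{n,1}$ other than $\alpha$ has the form $[m]$ with $m\ge1$. Such an element is not in $A_{n,m+1}$, by the same reasoning as in the strictness step. Therefore $\bigcap_{q\ge1}A_{n,q}=\{\alpha\}$.

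There is no real obstacle in this argument. The only point needing care is that one-letter words are distinct from each other and from $\alpha$, which holds by construction of $S_n$.
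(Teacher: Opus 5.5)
Your proposal is correct and follows the paper's proof: monotonicity is immediate, strictness comes from $[q]\in A_{n,q}\setminus A_{n,q+1}$, and the intersection is $\{\alpha\}$ because $\alpha$ lies in every $A_{n,q}$ while $[m]\notin A_{n,m+1}$. Your additional checks ($[q]\neq\alpha$, and $n\ge 2$ ensuring that one-letter words lie in $S_n$) are details the paper leaves implicit.
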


\begin{proof}
Clearly $A_{n,q+1}\subsetneq A_{n,q}$ because $[q] \in A_{n,q} \setminus
A_{n,q+1}$. Equally clearly, $\alpha \in A_{n,q}$ for all $q$, while $[m] \notin
A_{n,m+1}$, so that $\bigcap_{q\ge 1} A_{n,q}=\{\alpha\}$.
\end{proof}

\begin{lemma}\label{lem:powersEnq}
Let $n \ge 2$ and $h \in \N$.
\begin{enumerate}[label=\textnormal{(\roman*)},leftmargin=2em]
\item If $1 \le h < n$, then $A_{n,q}^h = \{\alpha\} \cup \{[m_1 \cdots m_h]: m_1, \ldots, m_h \ge q\}$.
\item If $h=n$, then $A_{n,q}^h = \{\alpha, \beta\}$.
\item If $h>n$, then $A_{n,q}^h = \{\alpha\}$.
\end{enumerate}
\end{lemma}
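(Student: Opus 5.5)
We compute $A_{n,q}^h$ by expanding a product of $h$ factors, each either $\alpha$ or a one-letter word $[m]$ with $m\ge q$, and sorting the products according to whether some factor equals $\alpha$.

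First, any product containing at least one factor $\alpha$ equals $\alpha$, because $\alpha$ is absorbing. Since $\alpha\in A_{n,q}$, the power $\alpha^h=\alpha$ lies in $A_{n,q}^h$ for every $h$. So $\alpha$ is always present, and it remains to describe the products $[m_1]\cdots[m_h]$ with all $m_i\ge q$.

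For these products we prove by induction on $k$ the following claim about partial products. If $k<n$, then $[m_1]\cdots[m_k]$ is the word $[m_1\cdots m_k]$ of length $k$. If $k=n$, the product is $\beta$. If $k>n$, the product is $\alpha$.

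1. The case $k=1$ is immediate, and the step $k\to k+1$ for $k+1<n$ is just concatenation, since $k+1<n$.
2. For $k+1=n$, the rule $u\cdot v=\beta$ applies, because the lengths $k$ and $1$ sum to $n$.
3. For $k+1>n$, the previous partial product is $\beta$ or $\alpha$. Multiplying either one by anything gives $\alpha$.

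Because the multiplication is associative (Lemma~\ref{lem:Tnsemigroup}), the bracketing of the product does not matter. Here $n\ge2$ ensures that $h=1$ falls into case (i).

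Putting these together:
\begin{itemize}
\item For $h<n$, the set $A_{n,q}^h$ consists of $\alpha$ together with exactly the words $[m_1\cdots m_h]$ with all $m_i\ge q$. Every such word is attained by choosing those letters as the factors.
\item For $h=n$, the set is $\{\alpha,\beta\}$, where $\beta$ is attained by $[q]^n$.
\item For $h>n$, only $\alpha$ occurs.
\end{itemize}

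No step is a real obstacle. The only point needing care is the induction in the overflow case: we must check that once the length reaches $n$, every further product stays at $\alpha$. This follows from $\beta$ and $\alpha$ being sent to $\alpha$ by any multiplication.
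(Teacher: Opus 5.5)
Your proof is correct and follows essentially the same route as the paper: you split products according to whether some factor is $\alpha$, otherwise track the length of a product of one-letter words, and use $\beta=[q]^n$ to witness case (ii). Your explicit induction on partial products makes precise a step the paper treats as immediate from the definition of the multiplication.
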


\begin{proof}
For $h<n$, any product of $h$ elements of $A_{n,q}$ is either $\alpha$ (if one factor is
$\alpha$) or the concatenation of $h$ one-letter words, hence a word of length $h$ all of
whose letters are at least $q$. This proves the inclusion ``$\subseteq$''.
Conversely, every word $[m_1\cdots m_h]$ with all $m_i\ge q$ is the product
$[m_1]\cdots[m_h]$, so the reverse inclusion holds as well.

For $h=n$, the same argument shows that any product of $n$ factors different
from $\alpha$ must be the product of $n$ one-letter words, and by definition
such a product is equal to $\beta$. Thus $A_{n,q}^n\subseteq\{\alpha, \beta\}$.
The reverse inclusion is clear: $\alpha \in A_{n,q}^n$, and $\beta =[q]^n\in
A_{n,q}^n$.

If $h>n$, any product containing a factor $\alpha$ is equal to $\alpha$. On the
other hand, if all $h$ factors are one-letter words, then the total length is
$h>n$, so the product is $\alpha$ by the definition of multiplication. Hence
$A_{n,q}^h=\{\alpha\}$.
\end{proof}

\begin{theorem}\label{thm:HN-single-exclusion}
For every $n\ge 2$,
\[
H_{\N}^{S_n}(A_{n,q})=\N\setminus\{n\}.
\]
\end{theorem}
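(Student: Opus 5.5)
The plan is to compare, for each $h\in\N$, the two sets $A^h$ and $\bigcap_{q\ge1}A_{n,q}^h$, where by Lemma~\ref{lem:singledecrease} we have $A=\{\alpha\}$. Since $\alpha\cdot\alpha=\alpha$, we get $A^h=\{\alpha\}$ for every $h\in\N$. The whole proof therefore reduces to computing $\bigcap_q A_{n,q}^h$ using Lemma~\ref{lem:powersEnq} and checking when this intersection equals $\{\alpha\}$. We split into the three ranges of $h$ given there.

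\emph{Case $1\le h<n$.} By Lemma~\ref{lem:powersEnq}(i), $A_{n,q}^h=\{\alpha\}\cup\{[m_1\cdots m_h]: m_i\ge q\}$. A word $[m_1\cdots m_h]$ lies in $A_{n,q}^h$ only if $\min_i m_i\ge q$. Hence it fails to lie in $A_{n,q}^h$ for $q=\min_i m_i+1$. Distinct elements of $S_n$ are distinct as formal objects: words of length $h\ge1$ are never $\alpha$ or $\beta$, and a word of length $h$ arises only as that concatenation. So the intersection over $q$ is $\{\alpha\}$, and $h\in H$. The case $h=1$ is also covered by Lemma~\ref{lem:onealways}.

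\emph{Case $h=n$.} By Lemma~\ref{lem:powersEnq}(ii), every $A_{n,q}^n=\{\alpha,\beta\}$, so the intersection is $\{\alpha,\beta\}\ne\{\alpha\}=A^n$, since $\beta\ne\alpha$. Thus $n\notin H$.

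\emph{Case $h>n$.} By Lemma~\ref{lem:powersEnq}(iii), every $A_{n,q}^h=\{\alpha\}=A^h$, so $h\in H$.

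Combining the three cases gives $H_{\N}^{S_n}(A_{n,q})=\N\setminus\{n\}$. No step is a real obstacle. The only point needing care is the case $h<n$: one must observe that a word is determined by its letters, so membership in $A_{n,q}^h$ is governed exactly by its minimal letter, and this forces the intersection down to $\{\alpha\}$.
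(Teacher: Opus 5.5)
Your proposal is correct and follows the paper's proof essentially verbatim: you use Lemma~\ref{lem:singledecrease} to get $A^h=\{\alpha\}$, then split into $h<n$, $h=n$, $h>n$ via Lemma~\ref{lem:powersEnq}. Your explicit choice $q=\min_i m_i+1$ in the case $h<n$ simply spells out the step the paper states in one line.
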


\begin{proof}
By Lemma~\ref{lem:singledecrease}, the intersection of the sequence is
$A_n:=\{\alpha\}$, and therefore $A_n^h=\{\alpha\}$ for every $h\in\N$.

If $1\le h<n$, then by Lemma~\ref{lem:powersEnq}(i), an element of $A_{n,q}^h
\setminus \{\alpha\}$ is a word of length $h$ whose letters are all at least
$q$; no such word can belong to the intersection $\bigcap_{q\ge 1}A_{n,q}^h$
over all $q$. Hence
\[
\bigcap_{q\ge 1}A_{n,q}^h=\{\alpha\}=A_n^h.
\]

If $h=n$, Lemma~\ref{lem:powersEnq}(ii) gives
\[
\bigcap_{q\ge 1}A_{n,q}^h = \{\alpha, \beta\} \ne \{\alpha\} = A_n^h.
\]
Hence $n\notin H_{\N}^{S_n}(A_{n,q})$.

Finally, if $h>n$, Lemma~\ref{lem:powersEnq}(iii) gives
\[
\bigcap_{q\ge 1}A_{n,q}^h=\{\alpha\}=A_n^h.
\]
Thus $H_{\N}^{S_n}(A_{n,q})=\N\setminus\{n\}$.
\end{proof}

\subsection{Classification of \texorpdfstring{$\HNstar$}{HN*}}
We can now prove our classification theorem on $\HNstar$.

\begin{proof}[Proof of Theorem~\ref{thm:hnstar-classification}]
By Lemma~\ref{lem:onealways}, every member of $\HNstar$ contains $1$, so it
suffices to prove the converse. Hence, with $X$ a set of positive integers with
$1\in X$, we aim to construct a semigroup $S$ and a strictly decreasing sequence
$(A_q)$ such that $H_{\N}^{S}(A_q) = X$.

If $X = \N$, then we can take $S$ to be the multiplicative semigroup
$(\Nzero,\cdot)$, with
\[
A_q := \{0\} \cup \{m \in \Nzero: m \ge q\} \qquad (q \ge 1).
\]

Then $(A_q)_{q\ge 1}$ is strictly decreasing, as $q \in A_q\setminus A_{q+1}$.
Furthermore, for all $h, q \in \N$ we have both $q \notin A_{q+1}$ and
$q \notin A_{q+1}^h$. Hence, for every $h$ we have
\[
\bigcap_{q\ge 1} A_q^h=\{0\}=\{0\}^h=\left(\bigcap_{q\ge 1}A_q\right)^h,
\]
implying $H_{\N}^{\Nzero}(A_q) = \N$, as desired.

Assume now that $X\ne\N$, and let $I := \N\setminus X \subseteq\{2,3,4,\dots\}$.
For every $n\in I$, let $S_n$ and $(A_{n,q})_{q\ge 1}$ be the building blocks
constructed above. We then define the product semigroup
\[
S:=\prod_{n\in I} S_n
\]
and the sequence of subsets
\[
A_q:=\prod_{n\in I} A_{n,q}\subseteq S\qquad (q\ge 1),
\]
which is strictly decreasing because any sequence $(A_{n,q})$ with fixed $n \in
I$ is.

Now, by Lemmas~\ref{lem:boxinter} and~\ref{lem:singledecrease} we get
\[
A=\prod_{n\in I}\left(\bigcap_{q\ge 1} A_{n,q}\right)=\prod_{n\in I}\{\alpha\},
\]
so that, by Corollary~\ref{cor:boxpower},
\[
A^h=\prod_{n\in I}\{\alpha\}^h=\prod_{n\in I}\{\alpha\},
\]
for all $h \in \N$. On the other hand,
\[
\bigcap_{q\ge 1} A_q^h
=\bigcap_{q\ge 1}\left(\prod_{n\in I} A_{n,q}^h\right)
=\prod_{n\in I}\left(\bigcap_{q\ge 1}A_{n,q}^h\right).
\]
This latter product is equal to $A^h$ if, and only if, all intersections
$\bigcap_{q\ge 1}A_{n,q}^h$ are equal to $\{\alpha\}$. By the proof of
Theorem~\ref{thm:HN-single-exclusion}, this happens precisely when $h \notin I$.
That is, $h\in H_{\N}^{S}(A_q)$ exactly when $h \in X$, finishing the proof.
\end{proof}

\section{The set \texorpdfstring{$\HQ$}{HQ}}
\label{sec:HQ}

This section will be devoted to determining $\HQ$.

\begin{proof}[Proof of Theorem~\ref{thm:hq-classification}]
Starting off with the easiest case, assume that $|Q| = 1$, say $Q = \{q\}$. Then
$A = A_q$, so that in particular $A^h = A_q^h$ for all $h \in \N$. We
therefore get that the product intersection set is equal to $\N$,
regardless of the semigroup $S$ and the sequence $(A_q)_{q\in Q}$.

Moving on to even fewer elements, let $Q$ be empty and recall that for the empty
family of subsets of a semigroup $S$, the intersection is $S$ itself. Hence the
associated product intersection set is $\{h\in\N:S^h=S\}$. We claim that this
set is always either $\{1\}$ or $\N$.

To see this, if $S^2=S$, then by induction $S^h=S$ for all $h\in\N$: once
$S^m=S$, we have
\[
S^{m+1}=S^mS=SS=S^2=S.
\]
Conversely, if $S^2\ne S$, then for
every $h\ge 2$ one has $S^h\subseteq S^2\ne S$, so $S^h\ne S$. Thus the only
possibilities are $\{1\}$ and $\N$, and both possibilities do actually occur.
Indeed, any monoid satisfies $S^h=S$ for all $h$, realizing $\N$. On the other
hand, let $S=\{0,1\}$ with multiplication $xy=0$ for all $x,y\in S$. Then
$S^h=\{0\}$ for every $h\ge 2$, so this semigroup realizes $\{1\}$.

Finally, we consider the case $|Q| \ge 2$, where we let $q_1$ and $q_2$ be two
distinguished elements of $Q$. In analogy with the proof of
Theorem~\ref{thm:hnstar-classification}, for each $n \ge 2$ we then construct a
semigroup together with a family of subsets, for which the product intersection
set is exactly $\N\setminus\{n\}$. So fix an integer $n \ge 2$, and equip the
set $S_n := \{0, 1, \ldots, n^3 + n^2\}$ with the operation
\[
x \star y = \min\{x+y, n^3 + n^2\}.
\]
Then $(S_n, \star)$ is a semigroup (a commutative monoid
with identity $0$, in fact), and by straightforward induction we have the
following formula for the $h$-fold product:

\begin{lemma}\label{lem:trunc-sum}
For all $h \in \N$ and all $a_1,\dots,a_h\in S_n$, we have
\[
a_1\star\cdots\star a_h=\min\{a_1+\cdots+a_h,n^3 + n^2\}.
\]
In particular, $a^h=\min\{ha,n^3 + n^2\}$ for all $a\in S_n$.
\end{lemma}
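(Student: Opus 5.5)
We argue by induction on $h$, with $N:=n^3+n^2$ for brevity; the product $a_1\star\cdots\star a_h$ is unambiguous because $\star$ is associative. For $h=1$ the claim is $a_1=\min\{a_1,N\}$, which holds since every element of $S_n$ is at most $N$. Assume the formula for some $h\ge1$. By associativity and the induction hypothesis,
\[
a_1\star\cdots\star a_{h+1}=(a_1\star\cdots\star a_h)\star a_{h+1}=\min\bigl\{\min\{s,N\}+a_{h+1},N\bigr\},
\]
where $s:=a_1+\cdots+a_h$.

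It remains to check the identity $\min\{\min\{s,N\}+a,N\}=\min\{s+a,N\}$ for nonnegative integers $s$ and $a$. If $s\le N$, the inner minimum is $s$ and both sides agree. If $s>N$, the left side is $\min\{N+a,N\}=N$, since $a\ge0$. The right side is also $N$, because $s+a>N$.

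The statement about powers $a^h$ is the special case $a_1=\cdots=a_h=a$. Associativity itself, which the paper asserts just before the lemma, follows from the same identity: both bracketings of $x\star y\star z$ equal $\min\{x+y+z,N\}$. No step here is a genuine obstacle; the only point needing care is the case $s>N$, where the nonnegativity of the elements is used.
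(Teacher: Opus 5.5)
Your proposal is correct and is exactly the ``straightforward induction'' on $h$ that the paper invokes without writing it out. You also verify the one identity the induction rests on, $\min\{\min\{s,N\}+a,N\}=\min\{s+a,N\}$ for $a\ge 0$, including the case $s>N$.
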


We now define the subsets $B_n=\{n^2,n^2+1\}$ and $C_n=\{n^2,n^2+n\}$ of $S_n$,
for which we have the following identities for the corresponding powers:

\begin{lemma}\label{lem:Pn-powers}
For all $h \le n$ we have
\[
B_n^h=\{hn^2+j: 0\le j\le h\}
\qquad\text{and}\qquad
C_n^h=\{hn^2+jn: 0\le j\le h\},
\]
while $B_n^h=C_n^h=\{n^3+n^2\}$ for all $h>n$.
\end{lemma}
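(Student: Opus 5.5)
By Lemma~\ref{lem:trunc-sum}, an element of $B_n^h$ is $\min\{a_1+\cdots+a_h,\,n^3+n^2\}$ with each $a_i\in\{n^2,n^2+1\}$. The plan is therefore to compute the set of untruncated sums first and then decide whether truncation ever kicks in. Writing $j$ for the number of indices with $a_i=n^2+1$, the untruncated sum equals $hn^2+j$. As the $a_i$ range over $B_n$, the integer $j$ takes every value in $\{0,1,\dots,h\}$, and each value is attained, for instance by $j$ copies of $n^2+1$ followed by $h-j$ copies of $n^2$. The same reasoning for $C_n$ gives the untruncated sums $hn^2+jn$, $0\le j\le h$. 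So before truncation,
\[
\{a_1+\cdots+a_h : a_i\in B_n\}=\{hn^2+j:0\le j\le h\},
\]
and analogously for $C_n$.

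For $h\le n$ the task is to check that truncation never happens, i.e.\ that the largest sum is at most $n^3+n^2$. The worst cases are $hn^2+h\le n^3+n$ and $hn^2+hn\le n^3+n^2$, both of which hold because $h\le n$. Hence the minimum in Lemma~\ref{lem:trunc-sum} is the sum itself, and the displayed formulas for $B_n^h$ and $C_n^h$ follow directly.

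For $h>n$, i.e.\ $h\ge n+1$, the smallest possible sum is $hn^2\ge (n+1)n^2=n^3+n^2$. So every product truncates to $n^3+n^2$, which gives $B_n^h=C_n^h=\{n^3+n^2\}$. This set is nonempty since $B_n$ and $C_n$ are nonempty.

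There is no real obstacle here. The only point requiring care is the boundary check that the maximal sums for $h\le n$ do not exceed the cap $n^3+n^2$; the cap was evidently chosen so that this holds exactly, with equality for $C_n$ at $h=n$.
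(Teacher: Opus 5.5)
Your proof is correct and follows the paper's argument: you count the number $j$ of summands equal to the larger element to obtain the untruncated sums, and for $h>n$ you use $hn^2\ge(n+1)n^2=n^3+n^2$. You also make explicit the check that no truncation occurs when $h\le n$ (tight for $C_n$ at $h=n$), a step the paper leaves implicit.
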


\begin{proof}
If $h \le n$, then any element of $B_n^h$ is the sum of $j$ copies of $n^2+1$
and $h-j$ copies of $n^2$, for some $j$ with $0 \le j \le h$, which indeed gives
\[
B_n^h=\{hn^2+j: 0\le j\le h\}.
\]
On the other hand, if $h > n$, then any sum of $h$ elements of $B_n$ is at least
$(n+1)n^2 = n^3 + n^2$, so that $B_n^h$ consists of the single element $n^3 +
n^2$. The analogous statements for $C_n^h$ are proved in the same way.
\end{proof}

With the above, the product intersection set is not too hard to determine.

\begin{theorem}\label{thm:pair-single-exclusion}
For every $n\ge 2$,
\[
\{h\in\N:(B_n\cap C_n)^h=B_n^h\cap C_n^h\}=\N\setminus\{n\}.
\]
\end{theorem}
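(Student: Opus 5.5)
The plan is a direct computation resting on Lemma~\ref{lem:trunc-sum} and Lemma~\ref{lem:Pn-powers}. First note that $B_n\cap C_n=\{n^2\}$, since $n^2+1\ne n^2+n$ for $n\ge 2$. By Lemma~\ref{lem:trunc-sum}, $(B_n\cap C_n)^h=\{\min\{hn^2,n^3+n^2\}\}$. This equals $\{hn^2\}$ for $h\le n$ and $\{n^3+n^2\}$ for $h>n$. It then remains to compare this singleton with $B_n^h\cap C_n^h$ in three ranges of $h$.

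For $h>n$, Lemma~\ref{lem:Pn-powers} gives $B_n^h=C_n^h=\{n^3+n^2\}$. The intersection is therefore $\{n^3+n^2\}=(B_n\cap C_n)^h$, so every $h>n$ lies in the set.

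For $h\le n$, Lemma~\ref{lem:Pn-powers} reduces the question to the arithmetic problem of which integers can be written as $hn^2+j=hn^2+kn$ with $0\le j,k\le h$. This forces $j=kn$.

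\begin{itemize}
\item If $h<n$, then $j\le h<n$, so $j=kn$ forces $k=0$ and $j=0$. Hence $B_n^h\cap C_n^h=\{hn^2\}$, and every $h<n$ lies in the set.
\item If $h=n$, the choice $k=1$, $j=n$ is admissible. So $n^3+n=n\cdot n^2+n$ lies in both $B_n^n$ and $C_n^n$, but not in $(B_n\cap C_n)^n=\{n^3\}$. Hence $n$ is excluded.
\end{itemize}

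One also checks that in the $h\le n$ range no truncation occurs. The largest element involved is $hn^2+hn\le n^3+n^2$, with equality only at $h=n$ for $C_n$. In that case $n^3+n^2$ is still the genuine value, and this is already encoded in Lemma~\ref{lem:Pn-powers}, so it needs no separate treatment.

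There is no real obstacle here. The only step needing care is the case $h=n$: one must verify that the witness $n^3+n$ lies in both power sets. This follows because $j=n\le h$ is permitted precisely when $h=n$, while for $h<n$ the bound $j\le h<n$ rules out any nonzero multiple of $n$. Combining the three ranges gives exactly $\N\setminus\{n\}$.
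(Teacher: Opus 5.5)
Your proof is correct and follows essentially the same route as the paper. It reduces to $(B_n\cap C_n)^h=\{\min\{hn^2,n^3+n^2\}\}$ via $B_n\cap C_n=\{n^2\}$, then uses Lemma~\ref{lem:Pn-powers} for $h>n$, the witness $n^3+n$ at $h=n$, and, for $h<n$, the fact that $j\le h<n$ rules out $j=kn$ with $k\ge 1$ (the paper phrases this as every $hn^2+kn$ with $k\ge1$ exceeding all elements of $B_n^h$).
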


\begin{proof}
As $B_n\cap C_n=\{n^2\}$, we have
\[
(B_n\cap C_n)^h=\{\min\{hn^2,n^3+n^2\}\}
\]
for every $h\in\N$, by Lemma~\ref{lem:trunc-sum}. By
Lemma~\ref{lem:Pn-powers}, this agrees with $B_n^h\cap C_n^h$ for $h>n$, but not
for $h=n$, where $n^3 + n$ is also contained in $B_n^h \cap C_n^h$. As for $h <
n$, we note that any element $hn^2 + jn$ of $C_n^h$ with $j \ge 1$ is larger
than any element $hn^2 + j' \in B_n^h$. Hence, for $h < n$ we also have
\[
(B_n\cap C_n)^h=B_n^h\cap C_n^h= \{hn^2\}. \qedhere
\]
\end{proof}

We can now finish the proof of Theorem~\ref{thm:hq-classification}. Let $X$ be
any set of positive integers with $1\in X$. For $X=\N$, we can take any monoid
$S$ and set $A_q=S$ for all $q\in Q$, which gives $H_Q^S(A_q)=\N$. So assume
that $X\ne\N$, and let $I := \N \setminus X \subseteq\{2,3,4,\dots\}$. For each
$n\in I$, let $S_n$, $B_n$, and $C_n$ be as above, and form the product monoid
\[
S :=\prod_{n\in I} S_n.
\]
Further, define subsets of $S$ by
\[
B :=\prod_{n\in I} B_n,
\qquad
C :=\prod_{n\in I} C_n,
\]
and let $(A_q)_{q\in Q}$ be defined by
\[
A_q :=
\begin{cases}
B, & q=q_1,\\
C, & q=q_2,\\
S, & q\in Q\setminus\{q_1,q_2\}.
\end{cases}
\]
Then $\bigcap_{q\in Q}A_q=B\cap C$. Moreover, since $S$ is a monoid, $S^h=S$ for
every $h\in\N$, and therefore $\bigcap_{q\in Q} A_q^h=B^h\cap C^h$. Thus
$H_Q^S(A_q)=\{h\in\N:(B\cap C)^h=B^h\cap C^h\}$.

By Lemma~\ref{lem:boxinter} and Corollary~\ref{cor:boxpower},
\[
(B\cap C)^h
=\left(\prod_{n\in I}(B_n\cap C_n)\right)^h
=\prod_{n\in I}(B_n\cap C_n)^h,
\]
while
\[
B^h\cap C^h
=\left(\prod_{n\in I}B_n^h\right)\cap\left(\prod_{n\in I}C_n^h\right)
=\prod_{n\in I}(B_n^h\cap C_n^h).
\]
Hence
\[
h\in H_Q^S(A_q)
\iff \forall n\in I,\; (B_n\cap C_n)^h=B_n^h\cap C_n^h.
\]
By Theorem~\ref{thm:pair-single-exclusion}, the latter is equivalent to
$h\ne n$ for every $n\in I$, that is, to $h\in X$.
Therefore $H_Q^S(A_q)=X$, and so $X\in\HQ$ as desired.
\end{proof}

\newpage

\section*{Appendix: Formal Discovery and Verification with Aristotle}
\label{sec:appendix}

The proofs in this paper were autonomously discovered and formally verified in
Lean by \emph{Aristotle}, a formal reasoning agent
developed by \textnormal{\href{https://www.harmonic.fun/}{Harmonic}}~\cite{Achim2025} and
publicly available for free at
\href{https://aristotle.harmonic.fun/}{\nolinkurl{aristotle.harmonic.fun}}.

We asked Aristotle to characterise $\HQ$ and $\HNstar$ separately,
then to combine both characterisations into a complete solution to
Problems~10 and~11 in~\cite{Nathanson2026Problems}. During the characterisation of
$\HNstar$, our prompt did not specify the meaning of ``asymptotically
strictly decreasing,'' and Aristotle adopted a definition stronger
than the one in~\cite[Section~2]{Nathanson2026Problems}. A simple argument shows
that the two definitions yield the same set $\HNstar$, and the Lean code
verifies this equivalence explicitly. We then asked Aristotle to
examine whether there were any other problems in \cite{Nathanson2026Problems} that
could also be resolved, and it observed that
\cite[Problems~4 and~7]{Nathanson2026Problems} are in fact consequences of the
characterisations of $\HQ$ and $\HNstar$ that it had obtained at the outset.

The full Lean formalisation can be inspected interactively in the
\href{https://live.lean-lang.org/#project=mathlib-v4.28.0&url=https://gist.githubusercontent.com/pitmonticone/a55e4bcf3f036e302725bf5f1616c135/raw/d3171921134d1a1def48f322fb00034e40074274/Nathanson_IPSSS_P10_P11.lean}%
{Lean~4 Web Editor}. Aristotle also informalised the Lean file into
a first draft of the present paper, which we subsequently rewrote
and polished.

\end{document}